\newtheorem{theorem}{Theorem}
\newtheorem{corollary}[theorem]{Corollary}
\newtheorem{lemma}[theorem]{Lemma}
\newtheorem{example}[theorem]{Example}
\newcommand{\set}[1]{\,\left\{#1\right\}}
\newcommand{\setd}[2]{\,\left\{#1\ \colon\ #2\right\}}
\newcommand{\ndm}[1]{J(#1)}
\newcommand{\RR}{\mathbb{R}}
\newcommand{\ZZ}{\mathbb{Z}}
\newcommand{\CC}{\mathbb{C}}
\newcommand{\NN}{\mathbb{N}}
\newcommand{\opi}{\overline{\pi}}
\newcommand{\wotlim}{\textsc{wot}-\lim}
\title{On complements of Kazhdan projections in semisimple groups\thanks{This project has received funding from the European Research Council (ERC) under the European Union's Horizon 2020 research and innovation programme (grant agreement no. 677120-INDEX).}}
\author[ ]{Piotr W. Nowak\thanks{pnowak@impan.pl}}
\author[ ]{Eric Reckwerdt\thanks{ereckwerdt@impan.pl}}
\affil[ ]{Institute of Mathematics\\ Polish Academy of Sciences\\ Warsaw, Poland}
\begin{document}

\maketitle

\begin{abstract}
We prove that for an isometric representation of some groups on certain Banach spaces, 
the complement of the subspace of invariant vectors is 1-complemented. 
\end{abstract}

\section{Introduction}
An isometric representation $\pi$ of a locally compact group $G$ on a reflexive Banach space $E$ induces a direct sum decomposition 
$$E=E^{\pi}\oplus E_{\pi}$$
into the invariant vectors $E^{\pi}$ and its canonical, $\pi$-invariant complement, $E_{\pi}$.  The corresponding projection $P^{\pi}$
from $E$ to $E^{\pi}$ along $E_{\pi}$ has norm 1 under fairly general conditions. 
However, typically,  the complementary projection $I-P^{\pi}$ need not be of  norm 1. 
An easy example is that of the regular representation of a finite group $G$ on $\ell_p(G)$, $p>2$, where $E^{\pi}$ are the constant functions and $E_{\pi}$ are
the functions on $G$ whose mean is zero (see Example \ref{example: projection I-P can have norm arbitrarily close to 2} for details). 

In this article we give conditions under which, for representations $\pi$ on a Banach space $E$ in a certain class $\mathcal{O}$, consisting of uniformly convex
uniformly smooth Banach spaces with the Opial property (see Section \ref{section: duality mappings} for details),
$E_{\pi}$ is 1-complemented and more precisely that
$$\Vert I-P^{\pi}\Vert =1.$$

\begin{theorem}\label{theorem: intro main technical}
Let $G$ be a locally compact group and let $\pi$ be an isometric representation on a Banach space $E\in \mathcal{O}$. Assume that there exists 
a sequence of elements $\set{g_n}\subset G$, $g_n\to \infty$, such that 
$$\wotlim \pi_{g_n}=\wotlim \pi_{g_n^{-1}} =P^{\pi}.$$
Then $E_{\pi}$ is Birkhoff-James orthogonal to $E^{\pi}$ and $\Vert I-P^{\pi}\Vert=1$. 
\end{theorem}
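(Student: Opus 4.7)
The plan is to prove the slightly stronger statement that for every $v\in E_\pi$ and $w\in E^\pi$ one has $\|v\|\le \|v+w\|$; this is exactly the Birkhoff–James orthogonality $E_\pi\perp E^\pi$, and since $(I-P^\pi)(v+w)=v$ it immediately yields $\|I-P^\pi\|\le 1$, with the reverse inequality being trivial (take $w=0$).

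The core step is to transport the decomposition along the sequence $\{g_n\}$. Fix $v\in E_\pi$ and $w\in E^\pi$ and set
$$u_n := \pi_{g_n}(v+w) = \pi_{g_n}v + w,$$
where the second equality uses the invariance of $w$. Since $v\in E_\pi=\ker P^\pi$ and $\pi_{g_n}\to P^\pi$ in the weak operator topology, $\pi_{g_n}v\to 0$ weakly, so $u_n\to w$ weakly. I would then invoke the Opial property of $E$ with weak limit $x=w$ and test point $y=0$, obtaining
$$\liminf_n \|u_n - w\| \le \liminf_n \|u_n\|.$$
Because each $\pi_{g_n}$ is an isometry, the left-hand side equals $\|\pi_{g_n}v\|=\|v\|$ and the right-hand side equals $\|v+w\|$, so the desired inequality $\|v\|\le\|v+w\|$ falls out at once.

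The main obstacle I anticipate is purely bookkeeping: one must verify that the version of Opial's property formalised in Section~\ref{section: duality mappings} is strong enough to control $\liminf\|u_n-y\|$ for an arbitrary test point $y$ (here $y=0$), rather than only for $y$ near the weak limit. The uniform convexity and uniform smoothness of $E$ play no direct role in this argument, which strongly suggests that the second hypothesis $\wotlim \pi_{g_n^{-1}}=P^\pi$ is not used in this step but rather beforehand — presumably to guarantee that the WOT-limit $P^\pi$ is indeed the canonical projection with kernel $E_\pi$ (the symmetry $\pi_{g_n}\leftrightarrow\pi_{g_n^{-1}}$ is the standard way to force idempotency and identify the kernel). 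Once this identification is in hand, the argument above is essentially three lines and I see no further difficulty.
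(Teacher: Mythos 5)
Your proof is correct, and it takes a genuinely different, more elementary route than the paper's. The paper works on the dual side: it proves $\langle w, J(v)\rangle = 0$ by combining Lemma \ref{lemma: duality map commutes with isometries}, the identity $(P^{\pi})^{*}=\wotlim \opi_{g_n^{-1}}$, the $\Delta$-convergence reformulation of the Opial property, and Kato's criterion \eqref{equation: Kato's condition} to translate back into Birkhoff--James orthogonality. You instead apply the Opial property in its primitive form to the single sequence $u_n=\pi_{g_n}(v+w)=\pi_{g_n}v+w$, which converges weakly to $w$; comparing the test points $y=w$ and $y=0$ and using that each $\pi_{g_n}$ is an isometry yields $\Vert v\Vert=\liminf\Vert u_n-w\Vert\le\liminf\Vert u_n\Vert=\Vert v+w\Vert$ (strictly when $w\neq 0$, since the paper's Opial inequality is strict; the case $w=0$ is trivial). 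This buys you three things: you bypass the duality-mapping machinery entirely, so uniform convexity and smoothness enter only through reflexivity; you use only the hypothesis $\wotlim\pi_{g_n}=P^{\pi}$, whereas the paper's computation genuinely needs both limits (the first to identify $(P^{\pi})^{*}$, the second to send $\pi_{g_n^{-1}}v$ weakly to $0$); and your worry about the admissible test points $y$ is unfounded, since the definition in Section \ref{section: duality mappings} quantifies over all $y\neq x_0$. The one inaccuracy is your closing speculation about the second hypothesis: $P^{\pi}$ is the canonical projection by definition, so the symmetric limit is not needed to identify its kernel --- it is consumed by the paper's dual-side computation, which your argument simply avoids. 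A pedantic caveat you share with the paper: the equality $\Vert I-P^{\pi}\Vert=1$ (rather than $\le 1$) presumes $E_{\pi}\neq\{0\}$.
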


The above theorem in particular applies to class of groups satisfying certain algebraic conditions, introduced as \emph{quasi-semisimple groups} in \cite{bader-gelander}.
They are defined via the existence of a $KAK$-type decomposition and generation by certain contraction subgroups, see \cite{bader-gelander,ciobotaru} for 
details. This class of groups includes the classical semisimple Lie groups as well as certain automorphism groups of trees. 

Recall that a representation $\pi$ of $G$ on a reflexive Banach space $E$  is a $c_0$-representation if the matrix coefficients 
$\langle \pi_g v,w\rangle$ vanish at infinity for
every $v\in E_{\pi}$ and $w\in E^{*}$. As shown in \cite{bader-gelander}, for quasi-semisimple groups this is the case for all isometric representations on reflexive Banach spaces. 
\begin{corollary}\label{corollary: c_0 reps have I-P of norm 1}
Let $\pi$ be a representation of a non-compact locally compact group $G$ on $E\in\mathcal{O}$. If $\pi$ is a $c_0$-representation then 
$\Vert 1-P^{\pi}\Vert =1$.

In particular, if $G$ is quasi-semisimple then the above estimate holds for all isometric representations on $E\in \mathcal{O}$.
\end{corollary}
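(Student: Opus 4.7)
The plan is to derive this as an immediate consequence of Theorem \ref{theorem: intro main technical}, so the entire task reduces to producing a sequence $\{g_n\}\subset G$ with $g_n\to\infty$ and
$\wotlim \pi_{g_n}=\wotlim \pi_{g_n^{-1}}=P^{\pi}$.

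First, I would use the direct sum decomposition $E=E^{\pi}\oplus E_{\pi}$ to compute the weak operator limit directly. Given any $v\in E$, write $v=v_1+v_2$ with $v_1=P^{\pi}v\in E^{\pi}$ and $v_2\in E_{\pi}$. For $w\in E^*$,
$$\langle \pi_g v,w\rangle = \langle v_1,w\rangle + \langle \pi_g v_2,w\rangle.$$
Since $\pi$ is a $c_0$-representation, the second term tends to $0$ as $g\to\infty$ (in the sense of leaving every compact subset of $G$). Hence $\langle \pi_g v,w\rangle\to\langle P^{\pi}v,w\rangle$ as $g\to\infty$, which is to say that $\pi_g\to P^{\pi}$ in the weak operator topology along the filter of complements of compact sets.

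Next, I would pick any sequence $g_n\to\infty$, which exists because $G$ is non-compact. By the above, $\wotlim \pi_{g_n}=P^{\pi}$. Since inversion is a homeomorphism of $G$ and sends compact sets to compact sets, $g_n^{-1}\to\infty$ as well. The same argument (applied to the representation itself, noting that the invariant subspace and complement are preserved by all $\pi_g$) gives $\wotlim \pi_{g_n^{-1}}=P^{\pi}$. Now Theorem \ref{theorem: intro main technical} applies and yields $\Vert I-P^{\pi}\Vert=1$.

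For the "in particular" statement, I would simply invoke the result of \cite{bader-gelander} cited in the preceding paragraph: every isometric representation of a quasi-semisimple group on a reflexive Banach space is a $c_0$-representation. Since every $E\in\mathcal{O}$ is in particular reflexive (being uniformly convex), the first part of the corollary applies. There is no real obstacle here; the only point requiring mild care is the elementary verification that $g_n\to\infty$ implies $g_n^{-1}\to\infty$ in a locally compact group, which follows from the fact that $K\mapsto K^{-1}$ is a bijection on the collection of compact subsets.
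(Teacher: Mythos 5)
Your proposal is correct and follows essentially the same route as the paper: both verify the hypothesis of Theorem \ref{theorem: intro main technical} by observing that the $c_0$ condition forces $\pi_g\to P^{\pi}$ in the weak operator topology as $g\to\infty$ (so any sequence $g_n\to\infty$, together with $g_n^{-1}\to\infty$, works), and both handle the quasi-semisimple case by invoking the Bader--Gelander result that all isometric representations on reflexive spaces are $c_0$-representations. Your explicit computation via the decomposition $v=P^{\pi}v+v_2$ merely spells out what the paper phrases as the \textsc{wot}-closure of $\pi(G)$ being a one-point compactification with $P^{\pi}$ at infinity.
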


This is particularly interesting in the context of properties such as property $(T_{\ell_p})$, studied by Bekka and Olivier \cite{bekka-olivier}.
In that case we obtain a stronger conclusion that holds not only in the algebra of bounded operators on $E\in \mathcal{O}$, but in the
the associated maximal group Banach algebra (see Section \ref{section: complements of Kazhdan projections}).

\begin{corollary}\label{corollary: complement of kazhdan projection has norm 1}
Let $G$ be quasi-semisimple. Assume that $G$ satisfies uniform property $(T_{\mathcal{O}})$, namely that there exists 
a Kazhdan projection $p$ in the Banach algebra $C_{\max}^{\mathcal{O}}(G)$. 
Then the element $1-p$ satisfies $\Vert 1-p\Vert=1$. 
\end{corollary}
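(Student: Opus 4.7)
The plan is to reduce the statement to Corollary \ref{corollary: c_0 reps have I-P of norm 1} by unpacking the definition of the maximal Banach algebra $C_{\max}^{\mathcal{O}}(G)$. This algebra should be (by its construction, which I expect to be recalled in Section \ref{section: complements of Kazhdan projections}) the completion of $L^1(G)$, or a unitization of it, in the norm
$$\|f\|_{\max} = \sup_\pi \|\pi(f)\|_{B(E)},$$
where $\pi$ ranges over all isometric representations of $G$ on Banach spaces $E\in \mathcal{O}$. By this universal property, each such $\pi$ extends uniquely to a contractive unital homomorphism $\pi \colon C_{\max}^{\mathcal{O}}(G) \to B(E)$. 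The hypothesis that a Kazhdan projection $p$ exists in $C_{\max}^{\mathcal{O}}(G)$ means, by definition, that $\pi(p) = P^{\pi}$ for every such $\pi$.

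Consequently $\pi(1-p) = I - P^{\pi}$ for every isometric representation on every $E \in \mathcal{O}$, and the maximal norm is computed as
$$\|1-p\| = \sup_\pi \|I - P^{\pi}\|_{B(E)}.$$
Since $G$ is quasi-semisimple, Corollary \ref{corollary: c_0 reps have I-P of norm 1} applies to every such $\pi$ and yields $\|I-P^{\pi}\|_{B(E)} = 1$. Taking the supremum gives $\|1-p\|\leq 1$. For the reverse inequality I would note that $1-p$ is a nonzero idempotent: its image under the left regular representation on $L^2(G)\in\mathcal{O}$ is $I - 0 = I$ since the noncompact group $G$ has no nonzero $L^2$-invariant vectors. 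Any nonzero idempotent in a Banach algebra has norm at least $1$, so $\|1-p\|\geq 1$.

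I expect no real obstacle beyond verifying two points that should be routine from Section \ref{section: complements of Kazhdan projections}: first, that $C_{\max}^{\mathcal{O}}(G)$ is indeed constructed so that every isometric representation on $E\in\mathcal{O}$ extends contractively and the norm is the supremum of operator norms; second, that the Kazhdan projection is by definition the element whose image under each such $\pi$ is $P^{\pi}$. With these in hand, the corollary is an immediate transfer of the pointwise norm-one bound of Corollary \ref{corollary: c_0 reps have I-P of norm 1} across the representations parametrizing the maximal norm.
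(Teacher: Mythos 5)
Your proposal is correct and follows the paper's own (very brief) argument: the norm on $C_{\max}^{\mathcal{O}}(G)$ is the supremum of $\Vert\pi(\cdot)\Vert$ over isometric representations on $E\in\mathcal{O}$, so $\Vert 1-p\Vert=\sup_\pi\Vert I-P^{\pi}\Vert=1$ by Corollary \ref{corollary: c_0 reps have I-P of norm 1}. Your additional verification of the lower bound (via a nonzero idempotent, or a representation with $E_\pi\neq 0$) is a detail the paper leaves implicit but is a welcome inclusion.
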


We remark that in \cite{bernau-lacey} Bernau and Lacey studied projections $P$ such that both $P$ and $I-P$ are of norm 1, and in particular gave
a general description of such projections in $L_p$-spaces.

\subsection*{Acknowledgements} 
We are very grateful to Nicolas Monod and Mikael de la Salle for insightful comments on an earlier version of the preprint and to Uri Bader for illuminating
comments and discussions.

\section{Banach spaces, duality mappings, and the Opial property}

\subsection{Duality mappings}\label{section: duality mappings}
Consider a \emph{duality mapping} $J: E\to \mathcal{S}(E^*)$, into the collection $\mathcal{S}(E^*)$ of subsets of $E^*$, defined by the condition
$$\ndm{v}=\setd{w\in E^*}{\langle v,w\rangle = \Vert v\Vert^2  \text{ and } \Vert w \Vert =\Vert v\Vert }.$$
A thorough discussion of duality mappings in Banach spaces can be found in e.g.  \cite{chidume}.

Denote by $w-\lim$ the limit in the weak topology on $E$. 
A Banach space $E$ has the \emph{Opial property} if for every weakly convergent sequence $\set{ x_n}\subseteq E$ with the weak limit $x_0\in E$ the inequality
$$\liminf \Vert x_n-x_0\Vert <\liminf \Vert x_n-y\Vert,$$
holds for every $y\neq x_0$. Every separable Banach space admits an equivalent norm with the Opial property \cite{vandulst}.

It is known that a uniformly convex uniformly smooth Banach space has Opial's property if and only  if the following condition 
is satisfied: 
for every sequence $\set{x_n}$ in $E$, we have 
$$w-\lim x_n=x\ \ \ \ \ \  \Longleftrightarrow\ \ \ \ \ \   w-\lim J(x_n-x) =0.$$ 
The second
mode of convergence is the so-called $\Delta$-convergence. 
We refer to \cite{solimini-tintarev} for details.

By $\mathcal{O}$ we will denote the class of uniformly convex uniformly smooth Banach spaces with the Opial property. 
It is known for instance that the spaces  $\ell_p$, $1<p<\infty$, belong to the class $\mathcal{O}$, as do their infinite direct  $q$-sums, $1<q<\infty$.
However, the spaces $L_p[0,1]$, $p\in(1,2)\cup (2,\infty)$ do not belong to the class $\mathcal{O}$.

\subsection{Representations} \label{subsection: representations}

Let $B(E)$ denote the space of bounded linear operators on $E$. We will consider $B(E)$ with the strong operator topology (\textsc{sot}) and with the weak operator topology (\textsc{wot}).
Let $G$ be a locally compact group. Let $\pi$ be an isometric representation of $G$ on $E\in \mathcal{O}$ that is continuous in the strong operator topology on $B(E)$. In other words, for every $v\in E$ the orbit map $G\to E$, $g\mapsto \pi_gv$, is continuous into the norm topology on $E$.
The dual space $E^*$ is naturally equipped with an isometric representation $\opi$, defined by the formula 
$$\opi_g=(\pi_g^*)^{-1},$$
for every $g\in G$. If $\pi$ is \textsc{sot}-continuous then so is $\opi$, under the assumption that $E$ is reflexive, see \cite[Proposition 4.1.2.3, page 224]{warner}. 
A \emph{matrix coefficient} of a representation $\pi$ on a Banach space $E$, associated to vectors $v\in E$, $w\in E^*$, is a function $\psi_{v,w}:G\to \CC$ defined by 
$\psi_{v,w}(g)=\langle \pi_gv,w\rangle$. When $E$ is reflexive we will say that $\pi$ is a \emph{$c_0$-representation } if 
$\psi_{v,w}\in C_0(G)$ for every $v\in E_{\pi}$ and $w\in E^*$.

\subsection{Decompositions and projections}
Denote 
$$E^{\pi}=\setd{ v\in E}{\pi_gv=v \text{ for all } g\in G}.$$
If $E$ is reflexive then there is a direct sum decomposition $E=E^{\pi}\oplus E_{\pi}$, where $E_{\pi}$ is the annihilator of the space $(E^*)^{\opi}$.
Since $E_{\pi}$ is $\pi$-invariant, the decomposition is in fact a decomposition of $\pi$ into the trivial representation and its complement.
See e.g. \cite{bfgm,bader-rosendal-sauer}. Note however, that if $E$ is not reflexive then the projection onto $E^{\pi}$ might not be equivariant
\cite[Example 2.29]{bfgm}, and in fact there might not be a bounded projection onto $E^{\pi}$ at all \cite[Theorem 1]{shulman}.

Let $P^{\pi}:E\to E^{\pi}$ be the projection along $E_{\pi}$. The projection $P^{\pi}$ is known to satisfy $\Vert P^{\pi}\Vert=1$ in many
cases, see \cite{paco, shulman}. However, for the complementary projection in general we only have
$$\Vert I-P^{\pi}\Vert \le 2.$$
\begin{example}\label{example: projection I-P can have norm arbitrarily close to 2}\normalfont

 Let $G$ be a finite group. Consider the left regular representation of $G$ on $E= \ell_p(G)$, $1\le p\le \infty$. 
The invariant vectors are then the constant functions on $G$ and the complement $E_{\pi}$ is the subspace of functions satisfying 
$$Mf= \dfrac{1}{\#G}\sum_{h\in G} f(h)=0.$$
In the case $p=\infty$ and the group $\ZZ_n$, $n\ge 3$ it is easy to see that  $\Vert I-P^{\pi}\Vert> 1$ by considering the vector
$f=(1,1,\dots,1,-1)\in \ell_{\infty}(\ZZ_n)$. Then $\Vert f\Vert =1$, $Mf=1-2/n$ and $\Vert f-Mf\Vert=2-2/n$.
Since $G$ is finite, clearly, by choosing $n\in \NN$ and $2<p<\infty$, sufficiently large we obtain that the norm of $\Vert I-P^{\pi}\Vert$
can be arbitrarily close to 2. The same is true for any group with a finite quotient of cardinality at least 3.

Observe that if $G$ is residually finite with a sequence $\set{N_i}$ of finite index normal subgroups, $\bigcap N_i=\set{e}$, then 
the previous case shows that for the representation of $G$ on $\ell_p(\coprod G/N_i)=(\bigoplus\ell_p(G/N_i))_{(p)}$ for $p=\infty$ the norm $\Vert I- P^{\pi}\Vert$ 
will be in fact 2.
We can thus choose $p$ such that the
norm of the projection $I-P^{\pi}$ will be arbitrarily close to 2.\qed
\end{example}

\section{Proofs}

Before proving the main theorem we first need a few lemmas regarding duality mappings and their behavior with respect to isometric representations within the class $\mathcal{O}$ of Banach spaces.
For an invertible isometry $S$ on $E$ denote $\overline{S}=(
S^*)^{-1}$.
\begin{lemma}\label{lemma: duality map commutes with isometries}
Let $E\in \mathcal{O}$ be a Banach space and let $S\in B(E)$ be an invertible isometry. Then 
$\overline{S}\ndm{v}=\ndm{Sv}$. 
\end{lemma}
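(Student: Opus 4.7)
The plan is to verify both defining conditions of $\ndm{Sv}$ directly for elements of the form $\overline{S}w$ with $w \in \ndm{v}$, then apply the same argument to $S^{-1}$ in order to obtain the reverse inclusion. The Opial property and the uniform convexity/smoothness are not needed here; the statement is really a general compatibility between duality mappings and invertible isometries, and its utility in later lemmas will come from combining it with the fact that in $\mathcal{O}$ the duality mapping is single-valued and weak-to-weak sequentially continuous.

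First I would fix $v \in E$ and $w \in \ndm{v}$, and check that $\overline{S} w = (S^{*})^{-1}w$ lies in $\ndm{Sv}$. For the norm condition, since $S$ is an invertible isometry on $E$, the adjoint $S^*$ is an invertible isometry of $E^*$, hence so is $(S^*)^{-1}$; therefore $\Vert \overline{S} w\Vert = \Vert w\Vert = \Vert v\Vert = \Vert Sv\Vert$. For the pairing condition, I would use the adjoint identity to compute
\[
\langle Sv,\overline{S}w\rangle = \langle Sv,(S^*)^{-1}w\rangle = \langle v, S^{*}(S^{*})^{-1}w\rangle = \langle v,w\rangle = \Vert v\Vert^{2} = \Vert Sv\Vert^{2}.
\]
This establishes the inclusion $\overline{S}\,\ndm{v}\subseteq \ndm{Sv}$.

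For the opposite inclusion, I would apply the same reasoning to the invertible isometry $S^{-1}$ and the vector $Sv$: any $w' \in \ndm{Sv}$ gives $\overline{S^{-1}}w' = S^{*}w' \in \ndm{S^{-1}(Sv)} = \ndm{v}$, whence $w' = \overline{S}(S^{*}w') \in \overline{S}\,\ndm{v}$. Combining the two inclusions yields $\overline{S}\,\ndm{v} = \ndm{Sv}$.

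There is no real obstacle; the only point that requires some care is the bookkeeping with $\overline{S} = (S^*)^{-1}$ versus $S^*$ when moving back and forth across the duality pairing, and the verification that $\overline{S}$ is indeed an isometry on $E^*$ (which follows because $S^{-1}$ is an isometry on $E$, so its adjoint $\overline{S} = (S^{-1})^{*}$ is an isometry on $E^*$).
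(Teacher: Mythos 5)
Your proof is correct, and its central computation --- moving $\overline{S}=(S^*)^{-1}$ across the pairing to get $\langle Sv,\overline{S}w\rangle=\langle v,w\rangle=\Vert v\Vert^2=\Vert Sv\Vert^2$ together with the observation that $\overline{S}$ is an isometry of $E^*$ --- is exactly the computation in the paper. Where you diverge is in the finishing step: the paper verifies that every element of $S^*\ndm{Sv}$ satisfies the two conditions defining $\ndm{v}$ and then invokes the fact that, by uniform convexity and uniform smoothness of $E$, the duality mapping is single-valued (so the element with $\Vert w\Vert=\Vert v\Vert$ and $\langle v,w\rangle=\Vert v\Vert^2$ is unique), concluding $S^*\ndm{Sv}=\ndm{v}$ directly. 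You instead prove the set equality by two inclusions, getting the reverse inclusion from the same argument applied to $S^{-1}$ and $Sv$. Your route has the advantage of treating $J$ honestly as a set-valued map and of not using the hypothesis $E\in\mathcal{O}$ at all: the lemma, as you prove it, holds for invertible isometries of an arbitrary Banach space. The paper's route is marginally shorter but ties the lemma to the geometric assumptions on $E$, which are in any case needed later (single-valuedness and the weak-to-weak continuity coming from the Opial property are what make the lemma usable in the proof of the main theorem). Both are valid; yours is the more general statement of the same fact.
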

\begin{proof}
    First note that since $S$ is an invertible isometry, so is $S^*$, and we have that 
     $\Vert S^* \ndm{Sv}\Vert = \Vert  \ndm{Sv}\Vert=\Vert Sv\Vert=\Vert v\Vert$. 
Thus 
\begin{align*}
    \langle v,S^* \ndm{Sv}\rangle = \langle S v,\ndm{S v}\rangle & = \Vert S v \Vert^2 = \Vert v\Vert^2=\langle v,\ndm{v}\rangle.
\end{align*}
By uniform convexity and uniform smoothness of $E$ the duality mapping $J$ is bijective, which 
guarantees that there is a unique element $w\in E^*$ such that 
$\Vert w\Vert=\Vert v\Vert$ and $\langle v,w\rangle=\Vert v\Vert^2$. This yields
$$S^* \ndm{S v}=\ndm{v},$$
and the claim is proved.\end{proof}

A vector $0\neq v\in E$ is \emph{Birkhoff-James orthogonal} to $0\neq w\in E$ if 
$$\Vert v\Vert \le \Vert v+\lambda w\Vert$$
for every $\lambda\in \RR$. Given subspaces $V_1,V_2$ in $E$ we will say that
$V_1$ is Birkhoff-James orthogonal to $V_2$ if for every $v_1\in V_1$ and $v_2\in V_2$, $v_1$ is Birkhoff-James orthogonal to $v_2$.

A useful lemma  by Kato \cite{kato} states that $\Vert v\Vert \le \Vert v+\lambda w\Vert$ for every $\lambda >0$ if and only if we have
\begin{equation}\label{equation: Kato's condition}
\langle w, \ndm{v}\rangle \ge 0.
\end{equation}
In particular this implies that $v$ is Birkhoff-James orthogonal to $w$ if we have 
$\langle w,\ndm{v}\rangle\ge 0$ and $\langle w,\ndm{-v}\rangle\ge 0$. 
Of course, this is equivalent to $\langle w,\ndm{v}\rangle =0$.
{
    \renewcommand{\thetheorem}{\ref{theorem: intro main technical}}

\begin{theorem}
Let $G$ be a locally compact group and let $\pi$ be an isometric representation on a Banach space $E\in \mathcal{O}$. Assume that there exists 
a sequence of elements $\set{g_n}\subset G$, $g_n\to \infty$, such that 
$$\wotlim \pi_{g_n} = \wotlim \pi_{g_n}^{-1} =P^{\pi}.$$
Then $E_{\pi}$ is Birkhoff-James orthogonal to $E^{\pi}$ and $\Vert I-P^{\pi}\Vert=1$. 
\end{theorem}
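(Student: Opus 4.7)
The plan is to reduce the assertion that $E_\pi$ is Birkhoff-James orthogonal to $E^\pi$ to the scalar identity $\langle u,\ndm{v}\rangle=0$ for all $v\in E_\pi$ and $u\in E^\pi$, and to extract the norm bound from that.  Indeed, by Kato's criterion \eqref{equation: Kato's condition}, applied in both signs, $v$ is Birkhoff-James orthogonal to $u$ precisely when $\langle u,\ndm{v}\rangle=0$, so this is the only thing that needs verifying.

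Fix $v\in E_\pi$ and $u\in E^\pi$.  I would apply Lemma~\ref{lemma: duality map commutes with isometries} to the invertible isometry $S=\pi_{g_n^{-1}}$, noting that $\overline{S}=(\pi_{g_n^{-1}}^*)^{-1}=\pi_{g_n}^*$, to obtain the key identity
\[
\pi_{g_n}^*\,\ndm{v}=\ndm{\pi_{g_n^{-1}}v}.
\]
The hypothesis $\wotlim \pi_{g_n^{-1}}=P^{\pi}$ combined with $v\in E_\pi$ gives $\pi_{g_n^{-1}}v\to P^{\pi}v=0$ weakly in $E$.  The duality-mapping characterization of Opial's property recalled in Section~\ref{section: duality mappings} then promotes weak convergence to $\Delta$-convergence, yielding $\ndm{\pi_{g_n^{-1}}v}\to 0$ weakly in $E^*$, and hence $\pi_{g_n}^*\ndm{v}\to 0$ weakly in $E^*$.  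Pairing with the invariant vector $u$ and using $\pi_{g_n}u=u$ produces the constant sequence
\[
\langle u,\ndm{v}\rangle=\langle \pi_{g_n}u,\ndm{v}\rangle=\langle u,\pi_{g_n}^*\ndm{v}\rangle\longrightarrow 0,
\]
so $\langle u,\ndm{v}\rangle=0$ as required.

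For the norm conclusion, any $x\in E$ splits as $x=u+v$ with $u=P^{\pi}x\in E^{\pi}$ and $v=(I-P^{\pi})x\in E_{\pi}$; applying the Birkhoff-James relation with $\lambda=1$ gives $\|(I-P^{\pi})x\|=\|v\|\le \|v+u\|=\|x\|$, so $\|I-P^{\pi}\|\le 1$, with equality in the non-trivial case $E_\pi\neq 0$ since $I-P^{\pi}$ is then a non-zero idempotent.  The substantive input is the Opial-type equivalence between weak convergence of a sequence and weak convergence of its images under $J$: this is precisely what converts the \textsc{wot}-assumption on the representation into information about the functional $\ndm{v}$, and it is the only place where membership in the class $\mathcal{O}$ is actually used; the rest of the argument is a formal combination of Lemma~\ref{lemma: duality map commutes with isometries}, the invariance of $u$, and Kato's lemma.
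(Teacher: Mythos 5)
Your proof is correct and follows essentially the same route as the paper's: reduce to $\langle u,\ndm{v}\rangle=0$ via Kato's criterion, use Lemma~\ref{lemma: duality map commutes with isometries} to rewrite $\pi_{g_n}^*\ndm{v}$ as $\ndm{\pi_{g_n^{-1}}v}$, and invoke the Opial/$\Delta$-convergence equivalence to send this weakly to $0$. The only (cosmetic) difference is that you pair directly with the invariant vector via $\pi_{g_n}u=u$, whereas the paper routes the same computation through $(P^{\pi})^{*}=\wotlim\pi_{g_n}^{*}$; your version also usefully spells out the final norm estimate, which the paper leaves implicit.
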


}

\begin{proof}
Let $w\in E^{\pi}$, $v\in E_{\pi}$ be arbitrary. We first observe that since $P^{\pi} =\wotlim \pi_{g_n}$, then 
$$(P^{\pi})^*=\wotlim \pi_{g_n}^*=\wotlim \opi_{g_n}^{-1}.$$
Thus we have 
\begin{align*}
\langle w,\ndm{v}\rangle &=\langle P^{\pi} w,\ndm{v}\rangle\\
&=\langle w,(P^{\pi})^*\ndm{v}\rangle\\
&= \lim \left\langle w, \opi_{g_n^{-1}} \ndm{v}\right \rangle\\
&= \lim \left\langle w, \ndm{ \pi_{g_n^{-1}} v} \right\rangle,
\end{align*}
where the last equality follows from Lemma \ref{lemma: duality map commutes with isometries}.

Now, by assumption, the sequence $\pi_{g_n}^{-1} v$ converges weakly to $P^{\pi}v=0$ since $v\in E_{\pi}$.
By the Opial property we have that this is the same as the condition that 
$$\ndm{\pi_{g_n}^{-1}v }\to 0$$
in the weak topology in $E^*$.
In particular, 
$$\lim \left\langle w, \ndm{\pi_{g_n}v}\right\rangle =\langle w,\ndm{v}\rangle=0.$$
The norm estimate $\Vert I-P^{\pi}\Vert=1$ is then a direct consequence of Birkhoff-James orthogonality.
\end{proof}

The above theorem applies to the class of \emph{quasi-semisimple groups}, studied in \cite{bader-gelander}. 
These groups are defined as satisfying two algebraic conditions: a version of a $KAK$-decomposition and that the group is generated by
certain contraction subgroups, associated to sequences of elements tending off to infinity. 
We refer to \cite{bader-gelander} and \cite{ciobotaru} for definitions and applications.

\begin{proof}[Proof of Corollary \ref{corollary: c_0 reps have I-P of norm 1}]
If $\pi$ is an isometric $c_0$-representation of $G$ on a reflexive Banach space $E$ then, for $v\in E_{\pi}$ and $w\in E^*$, every matrix coefficient of $\pi$ of the form $\psi_{v,w}=\langle \pi_g v, w\rangle$
 vanishes as $g$ goes to infinity. 
This in particular means, that the \textsc{wot} closure of $\pi(G)$ in $B(E)$ is the one-point compactification of $G$, with the point at infinity 
being precisely the projection $P^{\pi}$. 
In particular, this ensures that the assumptions of Theorem \ref{theorem: intro main technical} are satisfied.

The class of quasi-semisimple groups includes the classical semisimple Lie groups and certain automorphism groups of trees. 
As proved in \cite{bader-gelander}, quasi-semisimple groups satisfy a Veech decomposition of their space $W(G)$ of weakly almost periodic functions,
$$W(G)=C_0(G)\oplus \CC.$$
(See also \cite{veech} for the classical result of Veech on semisimple Lie groups.)
Every matrix coefficient of an isometric representation on a reflexive Banach space $E$ is an element of $W(G)$, 
with the projection (invariant mean on weakly almost periodic functions) $m:W(G)\to \CC$ given by 
$m(\psi_{v,w})=\langle P^{\pi}v,P^{\opi}w\rangle$. 
Thus, the previous argument applies to every isometric representation $\pi$ of a quasi-semisimple group on $E\in\mathcal{O}$.
\end{proof}

\subsection{Complements of Kazhdan projections}\label{section: complements of Kazhdan projections}

Let $\mathcal{O}$ be a class of Banach spaces. The group ring $\CC G$ can be completed in the the norm
$$\Vert f\Vert =\sup \Vert \pi(f)\Vert,$$
where the supremum is taken over all isometric representations of $G$ on Banach spaces $E\in \mathcal{O}$. 
This completion is a Banach algebra, which we denote $C^{\mathcal{O}}_{\max}(G)$. In the case when $\mathcal{O}$ consists of the class of Hilbert spaces the algebra is the 
maximal group $C^*$-algebra of $G$. 
A Kazhdan projection $p\in C^{\mathcal{O}}_{\max}(G)$ is an idempotent such that $\pi(p)=P^{\pi}$ for every isometric representation of $G$ on $E\in \mathcal{O}$.
$G$ has uniform property $(T_{\mathcal{O}})$ if a Kazhdan projection exists in $p\in C^{\mathcal{O}}_{\max}(G)$.
See \cite{drutu-nowak}
for a detailed study.

\begin{proof}[Proof of Corollary \ref{corollary: complement of kazhdan projection has norm 1}]
Follows from the fact that for every isometric representation of $G$ on $E\in \mathcal{O}$ we have 
$\Vert I-P^{\pi}\Vert= 1$.
\end{proof}

\begin{bibdiv}
\begin{biblist}

\bib{bfgm}{article}{
   author={Bader, U.},
   author={Furman, A.},
   author={Gelander, T.},
   author={Monod, N.},
   title={Property (T) and rigidity for actions on Banach spaces},
   journal={Acta Math.},
   volume={198},
   date={2007},
   number={1},
   pages={57--105},
}

\bib{bader-gelander}{article}{
   author={Bader, U.},
   author={Gelander, T.},
   title={Equicontinuous actions of semisimple groups},
   journal={Groups Geom. Dyn.},
   volume={11},
   date={2017},
   number={3},
   pages={1003--1039},
}

\bib{bader-rosendal-sauer}{article}{
   author={Bader, U.},
   author={Rosendal, C.},
   author={Sauer, R.},
   title={On the cohomology of weakly almost periodic group representations},
   journal={J. Topol. Anal.},
   volume={6},
   date={2014},
   number={2},
   pages={153--165},
}

\bib{bekka-olivier}{article}{
   author={Bekka, B.},
   author={Olivier, B.},
   title={On groups with property $(T_{\ell_p})$},
   journal={J. Funct. Anal.},
   volume={267},
   date={2014},
   number={3},
   pages={643--659},
}

\bib{bernau-lacey}{article}{
   author={Bernau, S. J.},
   author={Lacey, H. Elton},
   title={Bicontractive projections and reordering of $L_{p}$-spaces},
   journal={Pacific J. Math.},
   volume={69},
   date={1977},
   number={2},
   pages={291--302},
}

\bib{browder}{article}{
   author={Browder, F. E.},
   title={Fixed point theorems for nonlinear semicontractive mappings in
   Banach spaces},
   journal={Arch. Rational Mech. Anal.},
   volume={21},
   date={1966},
   pages={259--269},
}

\bib{brown-guentner}{article}{
   author={Brown, N.},
   author={Guentner, E.},
   title={Uniform embeddings of bounded geometry spaces into reflexive
   Banach space},
   journal={Proc. Amer. Math. Soc.},
   volume={133},
   date={2005},
   number={7},
   pages={2045--2050},
}		
	
\bib{chidume}{book}{
   author={Chidume, C.},
   title={Geometric properties of Banach spaces and nonlinear iterations},
   series={Lecture Notes in Mathematics},
   volume={1965},
   publisher={Springer-Verlag London, Ltd., London},
   date={2009},
   pages={xviii+326},
}

\bib{ciobotaru}{article}{
   author={Ciobotaru, C.},
   title={A unified proof of the Howe-Moore property},
   journal={J. Lie Theory},
   volume={25},
   date={2015},
   number={1},
   pages={65--89},
}

\bib{drutu-nowak}{article}{
   author={Dru\c{t}u, C.},
   author={Nowak, P. W.},
   title={Kazhdan projections, random walks and ergodic theorems},
   journal={Crelle's Journal, to appear},
 
}

\bib{vandulst}{article}{
   author={van Dulst, D.},
   title={Equivalent norms and the fixed point property for nonexpansive
   mappings},
   journal={J. London Math. Soc. (2)},
   volume={25},
   date={1982},
   number={1},
   pages={139--144},
}

\bib{paco}{article}{
   author={Garc\'\i a-Pacheco, F. J.},
   title={Complementation of the subspace of $G$-invariant vectors},
   journal={J. Algebra Appl.},
   volume={16},
   date={2017},
   number={7},
   pages={1750124, 7},
}

\bib{kato}{article}{
   author={Kato, T.},
   title={Nonlinear semigroups and evolution equations},
   journal={J. Math. Soc. Japan},
   volume={19},
   date={1967},
   pages={508--520},
}

\bib{lamperti}{article}{
   author={Lamperti, J.},
   title={On the isometries of certain function-spaces},
   journal={Pacific J. Math.},
   volume={8},
   date={1958},
   pages={459--466},
}

\bib{lindenstrauss-tzafriri-1}{book}{
   author={Lindenstrauss, J.},
   author={Tzafriri, L.},
   title={Classical Banach spaces. I},
   note={Sequence spaces;
   Ergebnisse der Mathematik und ihrer Grenzgebiete, Vol. 92},
   publisher={Springer-Verlag, Berlin-New York},
   date={1977},
   pages={xiii+188},
   }

\bib{opial}{article}{
   author={Opial, Zdzis\l aw},
   title={Weak convergence of the sequence of successive approximations for
   nonexpansive mappings},
   journal={Bull. Amer. Math. Soc.},
   volume={73},
   date={1967},
   pages={591--597},
   issn={0002-9904},
   review={\MR{0211301}},
}
   
 \bib{shulman}{article}{
   author={Shulman, T.},
   title={On subspaces of invariant vectors},
   journal={Studia Math.},
   volume={236},
   date={2017},
   number={1},
   pages={1--11},
}
		
\bib{solimini-tintarev}{article}{
   author={Solimini, S.},
   author={Tintarev, C.},
   title={Concentration analysis in Banach spaces},
   journal={Commun. Contemp. Math.},
   volume={18},
   date={2016},
   number={3},
   pages={1550038, 33},
}		

\bib{veech}{article}{
   author={Veech, W. A.},
   title={Weakly almost periodic functions on semisimple Lie groups},
   journal={Monatsh. Math.},
   volume={88},
   date={1979},
   number={1},
   pages={55--68},
}

\bib{warner}{book}{
   author={Warner, G.},
   title={Harmonic analysis on semi-simple Lie groups. I},
   note={Die Grundlehren der mathematischen Wissenschaften, Band 188},
   publisher={Springer-Verlag, New York-Heidelberg},
   date={1972},
   pages={xvi+529},
}

\end{biblist}
\end{bibdiv}

\end{document}